\newcommand{\ra}{\rightarrow}
\newcommand{\N}{\mathbb{N}}
\newcommand{\R}{\mathbb{R}}
\newcommand{\sph}{\mathbb{S}}
\newcommand{\sphsurf}{\mathfrak{s}}
\renewcommand{\d}{\text{d}}
\newcommand{\ind}{\mathds{1}}
\newcommand{\B}{\mathcal{B}}
\newcommand{\E}{\mathbb{E}}
\newcommand{\U}{\mathcal{U}}
\newcommand{\gap}{\textsf{gap}}
\newcommand{\W}{\mathcal{W}_{1\!}}
\newcommand{\Dob}{\textsf{Dob}}
\newcommand\ooint[2]{\,]#1,#2[}
\newcommand\coint[2]{\,[#1,#2[}
\begin{document}

\section{Introduction} \label{Sec:intro}

In this work we investigate quantitative theoretical properties of a geodesic random walk on the Euclidean $d$-sphere
\begin{equation*}
	\sph^d
	:= \{x \in \R^{d+1} \mid \norm{x} = 1\}
\end{equation*}
(where $\norm{\cdot}$ denotes the Euclidean norm) for arbitrary dimensions $d \geq 2$. Intuitively, the random walk we consider performs an iteration from its current state $X_{n-1} = x_{n-1}$ to a new state $X_n = x_n$ by two simple steps. First, it selects a closed geodesic $g_n$ of $\sph^d$ running through $x_{n-1}$, uniformly at random among all such geodesics. Then it chooses the new state $x_n$ as a realization of the uniform distribution over all points on $g_n$.

Our contribution is a novel theoretical result on this geodesic random walk, which establishes bounds on the speed of its convergence (in Wasserstein distance) from an arbitrary initial distribution to its stationary distribution (which is the uniform distribution over the sphere). In particular, we prove the dimension-independent upper bound $\rho = 7/8$ on the Wasserstein contraction rate of the walk's transition kernel (see Corollary \ref{Cor:dim_indep_bound} below).

Let us begin by providing a formal description of the walk as a Markov chain on $(\sph^d,\B(\sph^d))$, where $\B(\sph^d)$ denotes the Borel $\sigma$-algebra of $\sph^d$. For this we require some preliminaries. Let $\sigma_d$ be the natural surface measure on $(\sph^d,\B(\sph^d))$ and $\sphsurf_d := \sigma_d(\sph^d)$ its total mass. For any $x \in \sph^d$ let $\sph^{d-1}_x \subseteq \sph^d$ denote the \textit{great subsphere w.r.t.~$x$}, i.e.~the set
\begin{equation*}
	\sph^{d-1}_x := \{z \in \sph^d \mid x^{\top} z = 0\} ,
\end{equation*}
and $\sigma_{d-1}^{(x)}$ the natural surface measure on $\sph^{d-1}_x$. It is well-known that $\sigma_{d-1}^{(x)}(\sph^{d-1}_x) = \sphsurf_{d-1}$ (see e.g.~\cite[Section 2.2]{GSSS}).
Moreover, because $\sph^{d-1}_x$ canonically corresponds to the unit sphere in the tangent space to $\sph^d$ at $x$, it parametrizes the geodesics running through~$x$. Specifically, for any $z \in \sph^{d-1}_x$, a \textit{closed geodesic} (which on $\sph^d$ is the same as a great circle) through $x$ is given by the unit velocity curve
\begin{equation}
	g_{x,z} \colon \coint{0}{2\pi} \, \ra \sph^d, \; \omega \mapsto \cos(\omega) x + \sin(\omega) z .
	\label{Eq:def_g_xz}
\end{equation}
For notational convenience, we denote by $G_{x,z} := g_{x,z}(\!\coint{0}{2\pi}) \subset \sph^d$ the image of $g_{x,z}$.

Drawing a reference point $z \in \sph^{d-1}_x$ from $\U(\sph^{d-1}_x) := \frac{1}{\sphsurf_{d-1}} \sigma_{d-1}^{(x)}$ to determine a geodesic $g_{x,z}$ which runs through $x$ amounts to a uniform distribution over all such geodesics. Furthermore, due to the constant velocity of $g_{x,z}$, drawing $\Omega \sim \U(\!\coint{0}{2\pi})$ and applying $g_{x,z}$ to the result implements a uniform distribution over $G_{x,z}$. We refer to \cite[Section 2]{GSSS} and the references therein for more details on all of these preliminaries.

We may now express the geodesic random walk we described earlier by its transition kernel\footnote{In our nomenclature, a transition kernel is simply a Markov kernel whose arguments belong to the same measurable space. Basic knowledge of Markov kernels on the reader's part is presumed throughout this work. We refer to \cite[Section 1.2]{MCbook} for a sufficiently broad introduction to this topic.} $P$ given by
\begin{equation}
	P(x, A)
	:= \frac{1}{2 \pi \, \sphsurf_{d-1}} \int_{\sph^{d-1}_x} \int_0^{2 \pi} \ind_A(g_{x,z}(\omega)) \d\;\!\omega \, \sigma_{d-1}^{(x)}(\d z) ,
	\label{Eq:GRW}
\end{equation}
for $x \in \sph^d$, $A \in \B(\sph^d)$. 

The remainder of this paper is structured as follows. In Section \ref{Sec:motivation}, we discuss our motivation for examining the geodesic random walk specified above. Then, in Section \ref{Sec:BG}, we provide some background on Wasserstein contraction, the theoretical framework employed in our analysis. Afterwards, in Section \ref{Sec:main_res}, we state and prove our main result. We conclude with a short discussion of that result in Section \ref{Sec:discussion}.

\section{Motivation} \label{Sec:motivation}

Our examination of the geodesic random walk defined by the transition kernel~\eqref{Eq:GRW} is motivated in several different ways. First of all, the convergence rates of geodesic random walks may be of interest as standalone results, and other authors have previously established results similar to ours on such grounds. In this regard, we wish to highlight \cite{Mangoubi}, in which the authors examined a geodesic random walk on a fairly general Riemannian manifold $\mathcal{M}$, which could be chosen as $\sph^d$. Like the present work, the main result of \cite{Mangoubi} relied on the Wasserstein contraction framework to bound the walk's convergence rate by a term which, in the case of the underlying manifold being $\sph^d$, does not depend on the dimension $d$. On the other hand, the walk they considered for $\mathcal{M} = \sph^d$ is (in some sense) simpler than the one we are interested in: It also chooses a random geodesic through the current state (exactly like our walk), but then picks the next state deterministically by taking a step of a fixed size (a hyperparameter) along the selected geodesic (whereas our walk would perform a second sampling step here). Although the aforementioned main result of \cite{Mangoubi} is a much broader one than ours, in the sense that it applies to walks on many different manifolds, whereas we only permit $\sph^d$, our result has the advantage that the walk it considers is also of relevance to a number of methods for (approximate) sampling from essentially arbitrary distributions on either $\sph^d$ or $\R^d$. Before we elaborate on this point, let us emphasize that the proof of our result, despite proceeding within the same theoretical framework as \cite{Mangoubi}, relies on a markedly different strategy, primarily because our restriction to the $d$-sphere allows us to make extensive use of its unique symmetry properties.

As mentioned above, our motivation for analyzing the transition kernel~\eqref{Eq:GRW} also stems from its relevance to the properties of certain instances of slice sampling, which is a general framework for setting up Markov chain Monte Carlo (MCMC) methods for approximately sampling from arbitrary target distributions given by their (potentially unnormalized\footnote{As slice sampling approaches generally do not require the target density to be normalized, we will omit the ``potentially unnormalized'' in the following.}) densities. Specifically, our motivation relates to two particular (hybrid) slice samplers, one based on geodesics and one working in polar coordinates. For each of them, we now explain the sampler's inner workings in some detail and then point out the connection between its properties and this work's main result.

\textit{Geodesic slice sampling on the sphere} (GSSS) \cite{GSSS} is an MCMC method for approximate sampling from target distributions $\nu$ on $(\sph^d, \B(\sph^d))$ (for any $d \geq 1$) given by a density $\varrho \colon \sph^d \ra \coint{0}{\infty}$ relative to $\sigma_d$, meaning that one's only access to $\nu$ is through evaluations of $\varrho$ and one has
\begin{equation*}
	\nu(A)
	= \frac{\int_A \varrho(x) \sigma_d(\d x)}{\int_{\sph^d} \varrho(x) \sigma_d(\d x)} ,
	\qquad A \in \B(\sph^d) .
\end{equation*}
In \cite{GSSS}, the authors of GSSS proposed two different variants of the algorithm, \textit{ideal geodesic slice sampling} and \textit{geodesic shrinkage slice sampling}. They perform a transition from the current state $X_{n-1} = x_{n-1}$ to a new state $X_n = x_n$ by the following steps:
\begin{enumerate}
	\item Draw a threshold $T_n \sim \U(\!\ooint{0}{\varrho(x_{n-1})})$, call the result $t_n$. This defines the (super-) level set or \textit{slice}
	\begin{equation*}
		L(\varrho, t_n)
		:= \{x \in \sph^d \mid \varrho(x) > t_n\} ,
	\end{equation*}
	from which the methods then need to draw the new state $X_n$.
	\item Select a random geodesic $g_{x_{n-1},z_n}$ through $x_{n-1}$ by drawing $Z_n \sim \U(\sph^{d-1}_{x_{n-1}})$ and calling the result $z_n$.
	\item In case of ideal geodesic slice sampling, draw
	\begin{equation}
		\Omega_n \sim \U(\{\omega \in \coint{0}{2 \pi} \, \mid g_{x_{n-1},z_n}(\omega) \in L(\varrho, t_n)\}) , 
		\label{Eq:ideal_GSS_ome_up}
	\end{equation}
	call the result $\omega_n$ and set $x_n := g_{x_{n-1},z_n}(\omega_n)$. In case of geodesic shrinkage slice sampling, draw $\Omega_n \sim U_{\text{geo}}(0, \cdot)$, where $U_{\text{geo}}$ is an auxiliary transition kernel on $\coint{-2\pi}{2\pi} \times \B(\!\coint{-2\pi}{2\pi})$ which mimics sampling from the distribution in \eqref{Eq:ideal_GSS_ome_up} (up to shifts by $2\pi$) by means of a shrinkage procedure (a sort of adaptive acceptance/rejection method). Again call the result $\omega_n$ and set $x_n := g_{x_{n-1},z_n}(\omega_n)$.
\end{enumerate}
In the specific case of the target density $\varrho$ being constant (i.e.~$\varrho(x) = c > 0$, $x \in \sph^d$), the transition kernel of each of the two GSSS variants is easily seen to coincide with \eqref{Eq:GRW}, and so both variants simplify to the geodesic random walk we are interested in. Naturally, $\varrho$ being constant corresponds to the target distribution $\nu$ being the uniform distribution over the entire sphere, $\nu = \U(\sph^d)$. While this case was already covered by \cite[Theorems 15, 26]{GSSS}, those theorems only yielded a dimension-dependent convergence rate bound\footnote{In the two referenced results, this $\rho(d)$ plays the role that the total variation distance between the distribution of the $n$-th Markov chain iterate $X_n$ and the target distribution $\nu$ is upper-bounded by $\rho(d)^n$.} $\rho(d)$ which deteriorates with increasing dimension, in the sense that $\lim_{d \ra \infty} \rho(d) = 1$. In contrast, our result proves that GSSS for the uniform target distribution $\nu = \U(\sph^d)$ works well in any dimension $d \geq 2$.

As one may be inclined to disregard this result on the grounds that exact sampling from $\U(\sph^d)$ is computationally tractable (so that there is no point in applying MCMC to sample from this distribution in the first place), we should point out that the Wasserstein contraction property we establish is known to be robust under small perturbations of the transition kernel\footnote{More concretely, a sufficiently small perturbation of a Wasserstein contractive kernel is still Wasserstein contractive, albeit likely with a slightly worse contraction rate.}, cf.~\cite[Remark 54]{Ollivier}. Moreover, since the behavior of GSSS is easily seen to be fairly robust under small perturbations of the target distribution $\nu$ (unlike e.g.~gradient-based methods), we expect such changes to the target would only cause small perturbations to its transition kernel, so that the Wasserstein contraction rate from the uniform case $\nu = \U(\sph^d)$ should, to some extent, be preserved. Under this premise, our result on GSSS for the uniform target should intuitively transfer to GSSS for ``near-uniform'' target distributions, i.e.~ones whose densities $\varrho$ are not quite constant, but ``flat'' in the sense that their range $\sup \varrho - \inf \varrho$ is relatively small compared to their minimum value $\inf \varrho$. That is, the true convergence rate of GSSS for such near-uniform targets should be much closer to the dimension-independent rate bound we provide for the case $\nu = \U(\sph^d)$ than to the dimension-dependent rate bounds one obtains from the results in \cite{GSSS} for such near-uniform targets.

This in turn suggests GSSS could be useful as a base sampler for parallel tempering~\cite{Neal_PT} on high-dimensional spheres: In such schemes, one needs to apply the base sampler not only to a given target density $\varrho$ on the sphere, but also to ``heated-up'' versions of $\varrho$, which become flatter and flatter with increasing (simulated) temperature and thereby eventually enter the aforementioned near-uniform regime. Whereas the performances of typical choices of the base sampler, e.g.~the random walk Metropolis algorithm, are known to slowly deteriorate with increasing dimension, GSSS should, according to our above considerations, be able to maintain an excellent performance even in high dimensions (at least for the near-uniform high-temperature versions of the target). We therefore surmise that the use of GSSS as the base sampler in parallel tempering schemes on the sphere has the potential to substantially improve the overall performance of these schemes, compared to the use of a more conventional base sampler.

\textit{Gibbsian polar slice sampling} (GPSS) \cite{GPSS} is an MCMC method that can, in principle, be applied to any target distribution $\zeta$ on $(\R^d,\B(\R^d))$ (for any $d \geq 2$) given by a (potentially unnormalized) Lebesgue density $\eta \colon \R^d \ra \coint{0}{\infty}$, meaning
\begin{equation*}
	\zeta(A)
	= \frac{\int_A \eta(x) \d x}{\int_{\R^d} \eta(x) \d x} ,
	\qquad A \in \B(\R^d) .
\end{equation*}
GPSS replaces a given target density $\eta$ by the transformation $\widetilde{\eta}(x) := \norm{x}^{d-1} \eta(x)$, which allows it to generate its samples in polar coordinates, i.e.~as the product of a $\ooint{0}{\infty}$-valued random variable (the \textit{radius}) and an $\sph^{d-1}$-valued random variable (the \textit{direction}). Specifically, GPSS performs a transition from the current state $X_{n-1} = x_{n-1}$ to a new state $X_n = x_n$ by the following steps:
\begin{enumerate}
	\item Draw a threshold $T_n \sim \U(\!\ooint{0}{\widetilde{\eta}(x_{n-1})})$, call the result $t_n$ and define the slice
	\begin{equation*}
		L(\widetilde{\eta}, t_n)
		:= \{x \in \R^d \mid \widetilde{\eta}(x) > t_n\} ,
	\end{equation*}
	from which the new state $X_n$ must then be drawn.
	\item Let $r_{n-1} := \norm{x_{n-1}} \in \ooint{0}{\infty}$ and $\theta_{n-1} := x_{n-1} / r_{n-1} \in \sph^{d-1}$.
	\item Direction update: Draw $\Theta_n \sim Q_n(\theta_{n-1}, \cdot)$, where $Q_n$ denotes the transition kernel of GSSS for the target distribution
	\begin{equation*}
		\nu_n
		:= \U(\{\theta \in \sph^{d-1} \mid r_{n-1} \theta \in L(\widetilde{\eta},t_n)\}) ,
	\end{equation*}
	given by its density
	\begin{equation*}
		\varrho_n(\theta)
		:= \ind_{L(\widetilde{\eta},t_n)}(r_{n-1} \theta) ,
		\qquad \theta \in \sph^{d-1} .
	\end{equation*}
	Call the result $\theta_n$.
	\item Radius update: Draw $R_n \sim U_n(r_{n-1},\cdot)$, where $U_n$ denotes an auxiliary transition kernel on a subset of $\R$, designed to mimic sampling from
	\begin{equation*}
		\U(\{r \in \ooint{0}{\infty} \; \mid r \theta_n \in L(\widetilde{\eta}, t_n)\}) .
	\end{equation*}
	Call the result $r_n$. Finally, set $x_n := r_n \theta_n$.
\end{enumerate}
Given how GPSS relies on GSSS within its transition mechanism, it is perhaps not too surprising that there are strong links between their respective properties. Here we are particularly interested in the properties of GPSS for target densities $\eta$ which are \textit{rotationally invariant}, meaning $\eta(x) = h_{\eta}(\norm{x})$ for some $h_{\eta} \colon \coint{0}{\infty} \; \ra \coint{0}{\infty}$ and all $x \in \R^d$.
Let $r_{n-1}$ and $t_n$ be as in the above enumeration. Since, by construction,
\begin{equation*}
	\widetilde{\eta}(r_{n-1} \theta)
	= (r_{n-1})^{d-1} h_{\eta}(r_{n-1})
	= \widetilde{\eta}(x_{n-1})
	> t_n ,
	\qquad \theta \in \sph^{d-1} ,
\end{equation*}
one then has $r_{n-1} \theta \in L(\widetilde{\eta},t_n)$ for any $\theta \in \sph^{d-1}$, and thus $\nu_n = \U(\sph^{d-1})$. In other words, GPSS applied to a rotationally invariant target density performs its direction update by means of GSSS applied to $\U(\sph^{d-1})$. Thus our main result in this work not only permits an alternative view as a result on GSSS for the uniform target $\nu = \U(\sph^d)$, but also another one as a result on the direction update of GPSS for rotationally invariant targets.

Moreover, the examination of GPSS for such targets is well motivated by both theoretical considerations and practical observations from previous works. The motivation from the theoretical side arises as follows: In \cite{GPSS}, GPSS is constructed as a computationally efficient approximation to \textit{polar slice sampling} (PSS) \cite{RoRoPSS}, which transitions similarly to GPSS but, instead of drawing $R_n$ and $\Theta_n$ separately, relies on the joint update
\begin{equation*}
	(R_n, \Theta_n) \sim \U(\{(r,\theta) \in \ooint{0}{\infty} \, \times \sph^{d-1} \mid r \theta \in L(\widetilde{\eta},t_n)\}) .
\end{equation*}
The recent works \cite{PSS_gap,k-PSS} showed that PSS performs dimension-independently well (in terms of its spectral gap) on two classes of rotationally invariant target densities, one light-tailed and one heavy-tailed. Naturally, these results raise the question whether or not the dimension-independence of PSS is retained by GPSS. According to the recent work \cite{WPI_HSS}, the key quantity to examine in order to answer this question is the spectral gap of the \textit{on-slice transition kernel} $H_t$, which in our notation is determined by the current threshold $t := t_n$, takes the current state $x_{n-1}$ as input, and outputs the new state $x_n$. Admittedly, for a complete proof that GPSS does indeed have dimension-independent spectral gaps in the same cases as PSS, we would still need to conduct a considerable amount of further analysis, which we leave for future work. Nevertheless, the result we establish here should be an important ingredient for such a proof (besides the results on PSS from \cite{PSS_gap,k-PSS}), particularly because the Wasserstein contraction property we prove is well-known to imply an explicit lower bound on the spectral gap \cite[Proposition 30]{Ollivier}.

From the practical side, GPSS was already empirically observed to exhibit \textit{seemingly} dimension-independent performance for (certain) rotationally invariant targets by its authors in \cite{GPSS}. However, the empirical analysis of an MCMC method's output is generally insufficient for definitively confirming such a conjecture, so to be sure one must undertake a theoretical examination instead.

A strong additional motivation for analyzing GPSS for rotationally invariant targets stems from the recent work \cite{PATT_acc}. There the authors propose \textit{parallel affine transformation tuning} (PATT), a sampling scheme based on multiple interacting chains which collaboratively learn an affine transformation of the target density, aimed at bringing it into isotropic position (i.e.~mean zero, covariance is identity matrix). Essentially, each of these chains proceeds by running a \textit{base sampler} for the transformed target. While the base sampler may be an arbitrary MCMC method, the authors in \cite{PATT_acc} advocate designating GPSS the default choice for this role. Furthermore, for sufficiently regular target densities, their affine transformations into isotropic position either attain rotational invariance outright or at least come very close to it. Hence the performance of PATT with its default base sampler GPSS, denoted PATT-GPSS in \cite{PATT_acc}, is strongly dependent on the performance of GPSS for targets which are rotationally invariant or close to it. Like for GSSS and uniform targets, we surmise the performance of GPSS for targets which are nearly rotationally invariant should be close to its performance for rotationally invariant targets, but for now it remains an open problem to theoretically confirm this.

\section{Background on Wasserstein Contraction} \label{Sec:BG}

Before we can move on to our main result, we need to briefly introduce the relevant notions concerning Wasserstein contraction, for which we loosely follow the corresponding parts of \cite{k-PSS}. For two probability measures $\xi_1, \xi_2$ on $(\sph^d,\B(\sph^d))$, denote by $\Gamma(\xi_1,\xi_2)$ the set of couplings between them, i.e.~of probability measures $\gamma$ on $(\sph^d \times \sph^d, \B(\sph^d \times \sph^d))$ that satisfy
\begin{align*}
	&\gamma(A \times \sph^d) = \xi_1(A) , \\
	&\gamma(\sph^d \times A) = \xi_2(A)
\end{align*}
for any $A \in \B(\sph^d)$.
The restriction of the Euclidean distance on $\R^{d+1}$ to $\sph^d \subseteq \R^{d+1}$ is easily seen to constitute a metric on $\sph^d$. We can therefore define the Wasserstein distance (of order $1$) between $\xi_1$ and $\xi_2$ w.r.t.~the Euclidean distance,
\begin{equation*}
	\W(\xi_1,\xi_2)
	:= \inf_{\gamma \in \Gamma(\xi_1,\xi_2)} \int_{\sph^d \times \sph^d} \norm{v_1 - v_2} \gamma(\d v_1 \times \d v_2) .
\end{equation*}
Let $P$ be a Markov kernel on $\sph^d \times \B(\sph^d)$, then its \textit{Dobrushin coefficient} (w.r.t.~the Euclidean distance) is given by
\begin{equation}
	\Dob(P)
	:= \sup_{x,y \in \sph^d, x \neq y} \frac{\W(P(x,\cdot),P(y,\cdot))}{\norm{x - y}} ,
	\label{Eq:def_Dob}
\end{equation}
see \cite[Definition 20.3.1, Lemma 20.3.2]{MCbook}. We now say that $P$ is \textit{Wasserstein contractive with rate bound $\rho$} if $\Dob(P) \leq \rho < 1$. 
This type of contraction is known to be an extremely powerful property: If $P$ is Wasserstein contractive with rate bound $\rho$ and admits $\nu$ as its invariant distribution\footnote{Note that we know the invariant distribution of the transition kernel $P$ defined in \eqref{Eq:GRW} to be $\nu := \U(\sph^d)$, since the kernel coincides with that of GSSS for target distribution $\U(\sph^d)$ (cf.~Section \ref{Sec:motivation}) and GSSS always leaves its target distribution invariant \cite[Lemma 14]{GSSS}.}, then for any distribution $\mu$ on $(\sph^d,\B(\sph^d))$ one has
\begin{equation}
	\W(\mu P^n, \nu) \leq \rho^n \W(\mu, \nu), \qquad n \in \N_0
	\label{Eq:W_conv}
\end{equation}
(see e.g.~\cite[Theorem 20.3.4]{MCbook}), where $\mu P^n$ corresponds to the distribution of $X_n$ when $(X_n)_{n \in \N_0}$ is a Markov chain with initial distribution $\mu$ and transition kernel $P$. Moreover, it is well-known (see e.g.~\cite[Proposition 30]{Ollivier}) that, under the same assumptions, one gets
\begin{equation}
	\gap_{\nu}(P) \geq 1 - \rho ,
	\label{Eq:contraction_to_gap}
\end{equation}
where $\gap_{\nu}(P)$ denotes the ($L_2(\nu)$-)spectral gap of $P$. The spectral gap is known to be a key property of Markov chains, as it quantifies both the convergence speed in terms of the total variation distance and the asymptotic sample quality in terms of Monte Carlo integration errors. For details, we refer to \cite[Sections 1-2]{PSS_gap} and the references therein. Simply put, an explicit upper bound on the \textit{Wasserstein contraction rate} (we use this term synonymously to Dobrushin coefficient) of a kernel $P$ yields not only the geometric convergence \eqref{Eq:W_conv} but also the explicit bound \eqref{Eq:contraction_to_gap} on the spectral gap and by extension the various additional results such a bound entails.

\section{Main Result} \label{Sec:main_res}

Using the concepts presented in Section \ref{Sec:BG}, we can now formulate our main result:

\begin{theorem} \label{Thm:main_res}
	For any fixed $d \geq 2$, the transition kernel $P$ defined in \eqref{Eq:GRW}, which describes the geodesic random walk on $\sph^d$, is Wasserstein contractive with rate bound
	\begin{equation}
		\rho
		:= \frac{1}{2 \pi} \int_0^{2 \pi} \sqrt{\cos^2(\omega) + \frac{\sin^2(\omega)}{d} } \; {\normalfont\d}\;\!\omega .
		\label{Eq:contraction_rate}
	\end{equation}
\end{theorem}

Clearly, this bound is strictly decreasing in $d$. Thus, by computing its value for the smallest permissible dimension, $d = 2$, we obtain a dimension-independent bound on the rate that applies in all dimensions $d \geq 2$. Numerical integration shows that for $d = 2$ the rate bound \eqref{Eq:contraction_rate} is approximately $0.86$. Rounding this up to a nicer value, we end up with the following corollary.

\begin{corollary} \label{Cor:dim_indep_bound}
	For any fixed $d \geq 2$, the transition kernel $P$ defined in \eqref{Eq:GRW} is Wasserstein contractive with rate bound $\rho := 7/8$.
\end{corollary}

We postpone any further discussion of Theorem \ref{Thm:main_res} and Corollary \ref{Cor:dim_indep_bound} to the next section and focus here on proving the theorem. Fix $P$ to be as in \eqref{Eq:GRW} for the remainder of the section. Before we delve into any computations, we observe that the inherent symmetry of $\sph^d$ and the random walk kernel \eqref{Eq:GRW} on it allow us to make some important simplifications to the definition \eqref{Eq:def_Dob} of $\Dob(P)$ in our particular case. Namely, thanks to the rotational symmetry\footnote{Specifically, due to the symmetry of $\sph^d$ as a Riemannian manifold, the transition probabilities of $P$ are unaffected by rotations around the origin (applied to both its arguments).}, $\W(P(x,\cdot), P(y,\cdot))$ for any $x, y \in \sph^d$ can only depend on the (Euclidean) distance between $x$ and $y$, not on their absolute positions on the sphere.
It is therefore sufficient to arbitrarily fix $x \in \sph^d$ and only consider points $y$ from the image of a geodesic\footnote{A suitably chosen half of a geodesic would also suffice.} $g_{x,z}$ through $x$ from which $x$ itself is removed, i.e.~$G_{x,z} \setminus \{x\}$. As we may freely choose both $x$ and $z$, we choose them as points whose Euclidean coordinates are convenient to our proof, specifically
\begin{align}
	\begin{split}
		&x := (1,0,\dots,0)^{\top} \in \sph^d , \\
		&z := (0,1,0,\dots,0)^{\top} \in \sph^{d-1}_x .
	\end{split}
	\label{Eq:def_x_z}
\end{align}
Note that this choice of $z$ yields
\begin{equation}
	G_{x,z} \setminus \{x\} 
	= \{ y_{\alpha} := (\cos(\alpha), \sin(\alpha), 0,\dots,0)^{\top} \in \sph^d \mid \alpha \in \ooint{0}{2\pi} \, \} .
	\label{Eq:def_S_yalpha}
\end{equation}
Using the definitions in \eqref{Eq:def_x_z}, \eqref{Eq:def_S_yalpha}, we may now rewrite $\Dob(P)$ as
\begin{equation}
	\Dob(P) 
	= \sup_{\alpha \in \ooint{0}{2\pi}} \frac{\W(P(x,\cdot), P(y_{\alpha},\cdot))}{\norm{x - y_{\alpha}}}_{\textbf{.}}
	\label{Eq:Dob_symmetry}
\end{equation}

It remains to show that \eqref{Eq:Dob_symmetry} is upper-bounded by \eqref{Eq:contraction_rate}. For the sake of a simpler proof structure, we first establish the following auxiliary result.

\begin{lemma} \label{Lem:rotation_is_coupling}
	For any $\alpha \in \R$, the rotation matrix
	\begin{equation*}
		R_{\alpha} = \begin{pmatrix} 
			\cos(\alpha) & -\sin(\alpha) & & & \\
			\sin(\alpha) & \cos(\alpha) & & & \\
			& & 1 & & \\
			& & & \ddots & \\
			& & & & 1 \\
		\end{pmatrix} 
		\in \R^{(d+1) \times (d+1)}
	\end{equation*}
	defines a transport map between $P(x,\cdot)$ and $P(y_{\alpha},\cdot)$ via $v \mapsto R_{\alpha} v$. That is, the probability measure $\gamma_{\alpha}$ on $\sph^d \times \sph^d$ determined by
	\begin{equation*}
		\gamma_{\alpha}(A \times B)
		:= \int_{\sph^d} \ind_A(v) \ind_B(R_{\alpha} v) P(x, \d v) ,
		\qquad A, B \in \B(\sph^d)
	\end{equation*}
	is a coupling between $P(x,\cdot)$ and $P(y_{\alpha},\cdot)$.
\end{lemma}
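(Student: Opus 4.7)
The plan is to verify the two marginal conditions defining a coupling. The first marginal is immediate: setting $B = \sph^{d-1}$ in the definition of $\gamma_\alpha$ yields $\gamma_\alpha(A \times \sph^{d-1}) = \int_{\sph^{d-1}} \ind_A(v)\, P(x,\d v) = P(x,A)$, since $R_\alpha v \in \sph^{d-1}$ always. So the entire content lies in checking the second marginal, namely $P(y_\alpha, B) = \int_{\sph^{d-1}} \ind_B(R_\alpha v)\, P(x, \d v)$. In words, I need to show that pushing forward $P(x,\cdot)$ by $R_\alpha$ produces exactly $P(y_\alpha, \cdot)$; equivalently, that $P$ is equivariant under the action of $R_\alpha$.

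To establish this equivariance, I would use the formula \eqref{Eq:random_walk_kernel} directly. Observe first that $R_\alpha x = y_\alpha$ by the definition of $y_\alpha$ in \eqref{Eq:def_S_yalpha}. Since $R_\alpha$ is an orthogonal transformation, it maps the great subsphere $\sph^{d-2}_x = \{z : x^T z = 0\}$ bijectively onto $\sph^{d-2}_{y_\alpha}$, and the natural surface measure $\sigma_{d-2}^{(x)}$ is pushed forward to $\sigma_{d-2}^{(y_\alpha)}$ (both great subspheres are isometric copies of $\sph^{d-2}$, and $R_\alpha$ restricts to an isometry between them). Second, because $R_\alpha$ is linear, the defining formula \eqref{Eq:def_g_xz} gives the identity $R_\alpha g_{x,z}(\omega) = \cos(\omega) R_\alpha x + \sin(\omega) R_\alpha z = g_{R_\alpha x, R_\alpha z}(\omega) = g_{y_\alpha, R_\alpha z}(\omega)$.

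Putting these two facts together, I would compute
\begin{align*}
\int_{\sph^{d-1}} \ind_B(R_\alpha v)\, P(x,\d v)
&= \frac{1}{2\pi\,\sigma_{d-2}^{(x)}(\sph^{d-2}_x)} \int_{\sph^{d-2}_x} \int_0^{2\pi} \ind_B(R_\alpha g_{x,z}(\omega))\, \d\omega\, \sigma_{d-2}^{(x)}(\d z) \\
&= \frac{1}{2\pi\,\sigma_{d-2}^{(y_\alpha)}(\sph^{d-2}_{y_\alpha})} \int_{\sph^{d-2}_{y_\alpha}} \int_0^{2\pi} \ind_B(g_{y_\alpha, z'}(\omega))\, \d\omega\, \sigma_{d-2}^{(y_\alpha)}(\d z') \\
&= P(y_\alpha, B),
\end{align*}
where the second equality is the change of variables $z' = R_\alpha z$, using that $R_\alpha$ preserves surface measure on the great subspheres and commutes with the parametrization of geodesics.

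I do not expect any real obstacle here; the statement is essentially a rotational-invariance check and the work is confined to bookkeeping around the pushforward of $\sigma_{d-2}^{(x)}$ under $R_\alpha$. The only point that deserves care is confirming that the normalizing constants $\sigma_{d-2}^{(x)}(\sph^{d-2}_x)$ and $\sigma_{d-2}^{(y_\alpha)}(\sph^{d-2}_{y_\alpha})$ coincide, which they do because both great subspheres are isometric to $\sph^{d-2}$ equipped with its standard surface measure.
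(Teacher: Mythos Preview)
Your proposal is correct and follows essentially the same approach as the paper: both verify the first marginal trivially and then establish the second by expanding $P(x,\cdot)$ via \eqref{Eq:random_walk_kernel}, using $R_\alpha g_{x,z}(\omega) = g_{y_\alpha, R_\alpha z}(\omega)$ together with the fact that $R_\alpha$ is an isometry from $\sph^{d-2}_x$ to $\sph^{d-2}_{y_\alpha}$ pushing $\sigma_{d-2}^{(x)}$ forward to $\sigma_{d-2}^{(y_\alpha)}$. The only cosmetic difference is that the paper first rewrites $\ind_B(R_\alpha v)$ as $\ind_{R_{-\alpha}(B)}(v)$ to pass through $P(x, R_{-\alpha}(B))$ before expanding, whereas you expand directly; the substance is identical.
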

\begin{proof}
	We immediately get
	\begin{equation*}
		\gamma_{\alpha}(A \times \sph^d)
		= \int_{\sph^d} \ind_A(v) P(x, \d v) 
		= P(x, A) ,
		\qquad A \in \B(\sph^d) ,
	\end{equation*}
	which is already half of what we need to show. The second half turns out to be slightly more complicated but still straightforward. For the sake of brevity, we denote by $c_d := (2\pi \sphsurf_{d-1})^{-1}$ the normalization constant of $P$. We then get
	\begin{align*}
		\gamma_{\alpha}(\sph^d \times B)
		&= \int_{\sph^d} \ind_B(R_{\alpha} v) P(x, \d v) 
		= \int_{\sph^d} \ind_{R_{-\alpha}(B)}(v) P(x, \d v) \\ 
		&= P(x, R_{-\alpha}(B)) 
		= c_d \int_{\sph^{d-1}_x} \int_0^{2\pi} \ind_{R_{-\alpha}(B)}(g_{x,z}(\omega)) \d\;\!\omega \, \sigma_{d-1}^{(x)}(\d z) \\ 
		&= c_d \int_{\sph^{d-1}_x} \int_0^{2\pi} \ind_{R_{-\alpha}(B)}(\cos(\omega) x + \sin(\omega) z) \d\;\!\omega \, \sigma_{d-1}^{(x)}(\d z) \\ 
		&= c_d \int_{\sph^{d-1}_x} \int_0^{2\pi} \ind_{B}(\cos(\omega) R_{\alpha} x + \sin(\omega) R_{\alpha} z) \d\;\!\omega \, \sigma_{d-1}^{(x)}(\d z) \\ 
		&= c_d \int_{\sph^{d-1}_{y_{\alpha}}} \int_0^{2\pi} \ind_{B}(\cos(\omega) y_{\alpha} + \sin(\omega) z) \d\;\!\omega \, \big( (R_{\alpha})_{\#} \sigma_{d-1}^{(x)} \big) \! (\d z) \\ 
		&= c_d \int_{\sph^{d-1}_{y_{\alpha}}} \int_0^{2\pi} \ind_{B}(\cos(\omega) y_{\alpha} + \sin(\omega) z) \d\;\!\omega \, \sigma_{d-1}^{(y_{\alpha})} (\d z) \\ 
		&= P(y_{\alpha}, B)
	\end{align*}
	for any $B \in \B(\sph^d)$. Note that we make repeated use of $R_{\alpha} x = y_{\alpha}$ (which trivially follows from their respective definitions) and that the second to last equation relies on the fact that $R_{\alpha}$ is an isometry from $\sph^{d-1}_x$ to $\sph^{d-1}_{y_{\alpha}}$.
\end{proof}

With the identity \eqref{Eq:Dob_symmetry} and Lemma \ref{Lem:rotation_is_coupling} at hand, we are now well-equipped to prove the theorem.

\begin{proof}[Proof of Theorem \ref{Thm:main_res}]
	Fix $\alpha \in \ooint{0}{2\pi}$ for now. Observe that for $v = (v_1,\dots,v_{d+1})^{\top} \in \sph^d$, one has
	\begin{equation*}
		R_{\alpha} v = (\cos(\alpha) v_1 - \sin(\alpha) v_2, ~ \sin(\alpha) v_1 + \cos(\alpha) v_2, ~ v_3,\dots,v_{d+1})^{\top} ,
	\end{equation*}
	so that
	\begin{align*}
		\norm{v - R_{\alpha} v}^2
		&= \big((1-\cos(\alpha)) v_1 + \sin(\alpha) v_2 \big)^2 + \big((1-\cos(\alpha)) v_2 - \sin(\alpha) v_1 \big)^2 \\ 
		&= (1-\cos(\alpha))^2 v_1^2 + 2 (1-\cos(\alpha)) \sin(\alpha) v_1 v_2 + \sin^2(\alpha)  v_2^2 \\
		&\quad + (1-\cos(\alpha))^2 v_2^2 - 2 (1-\cos(\alpha)) \sin(\alpha) v_1 v_2 + \sin^2(\alpha) v_1^2 \\ 
		&= \big( (1-\cos(\alpha))^2 + \sin^2(\alpha) \big) (v_1^2 + v_2^2) \\ 
		&= \big( 1 - 2 \cos(\alpha) + \cos^2(\alpha) + \sin^2(\alpha) \big) (v_1^2 + v_2^2) \\ 
		&= 2 (1 - \cos(\alpha)) (v_1^2 + v_2^2) . 
	\end{align*}
	Moreover, since $R_{\alpha} x = y_{\alpha}$, plugging $v := x$ (still defined as in \eqref{Eq:def_x_z}) into the above computation yields
	\begin{equation*}
		\norm{x - y_{\alpha}}^2 
		= 2 (1 - \cos(\alpha)) .
	\end{equation*}
	Applying Lemma \ref{Lem:rotation_is_coupling} and then plugging in the above, we obtain
	\begin{align*}
		\frac{\W(P(x,\cdot), P(y_{\alpha},\cdot))}{\norm{x - y_{\alpha}}}
		&\leq \frac{1}{{\norm{x - y_{\alpha}}}} \int_{\sph^d \times \sph^d} \norm{v - \tilde{v}} \gamma_{\alpha}(\d v \times \d \tilde{v}) \\ 
		&= \frac{1}{{\norm{x - y_{\alpha}}}} \int_{\sph^d} \norm{v - R_{\alpha} v} P(x, \d v) \\ 
		&= \int_{\sph^d} \sqrt{v_1^2 + v_2^2} \, P(x, \d v) . 
	\end{align*}
	Remarkably, the final expression above does not depend on $\alpha$ anymore. Therefore, combining this result with \eqref{Eq:Dob_symmetry}, we immediately get
	\begin{equation}
		\Dob(P) 
		\leq \int_{\sph^d} \sqrt{v_1^2 + v_2^2} \, P(x, \d v) .
		\label{Eq:Dob_without_sup}
	\end{equation}
	
	In the remainder of the proof, we further simplify the above integral. To increase legibility, we rely on the following notation: For a probability space $(H,\mathcal{H},\xi)$ and a measurable function $f \colon (H,\mathcal{H}) \ra (\R,\B(\R))$, we denote by
	\begin{equation*}
		\E_{W \sim \xi}(f(W))
		:= \int_H f(w) \xi(\d w)
	\end{equation*}
	the expectation of the transformation $f(W)$ of a random variable $W \sim \xi$.
	
	By definition, points in $\sph^{d-1}_x$ are perpendicular to $x$, so that
	\begin{equation}
		u_1 = x^{\top} u = 0 \qquad \forall u = (u_1,\dots,u_{d+1})^{\top} \in \sph^{d-1}_x .
		\label{Eq:u_1_is_zero}
	\end{equation}
	Moreover, $\U(\sph^{d-1}_x)$ is invariant under permutations of the vector components at indices $2$ to $d+1$, so that we must have
	\begin{equation*}
		1 
		= \E_{U \sim \, \U(\sph^{d-1}_x)}(\norm{U}^2) 
		\stackrel{\eqref{Eq:u_1_is_zero}}{=} \sum_{i=2}^{d+1} \E_{U \sim \U(\sph^{d-1}_x)}(U_i^2) 
		= d \cdot \E_{U \sim \U(\sph^{d-1}_x)}(U_2^2) , 
	\end{equation*}
	meaning
	\begin{equation}
		\E_{U \sim \U(\sph^{d-1}_x)}(U_2^2)
		= \frac{1}{d}_{\textbf{.}}
		\label{Eq:expectation_subsphere}
	\end{equation}
	\begin{minipage}{\linewidth} 
	Finally, we bound $\Dob(P)$ by applying, in this order (one bullet point per (in)equality),
	\begin{itemize}
		\item inequality \eqref{Eq:Dob_without_sup}
		\item the definitions \eqref{Eq:GRW}, \eqref{Eq:def_g_xz} of $P$ and $g_{x,u}$ 
		\item the definition \eqref{Eq:def_x_z} of $x$ and equation \eqref{Eq:u_1_is_zero}
		\item Fubini's theorem and the reformulation of an integral as an expectation
		\item the fact that $r \mapsto \sqrt{\cos^2(\omega) + \sin^2(\omega) r}$ is a concave function on $\coint{0}{\infty}$ (for any $\omega$), Jensen's inequality and monotonicity of the integral
		\item equation \eqref{Eq:expectation_subsphere}
	\end{itemize}
	\end{minipage}
	and obtain
	\begin{align*}
		\Dob(P)
		&\leq \int_{\sph^d} \sqrt{v_1^2 + v_2^2} \, P(x, \d v) \\
		&= \frac{1}{2 \pi \, \sphsurf_{d-1}} \int_{\sph^{d-1}_x} \int_0^{2\pi} \sqrt{\textstyle \sum_{i=1}^2 \big(\! \cos(\omega) x_i + \sin(\omega) u_i \big)^2} \; \d\;\!\omega \, \sigma_{d-1}^{(x)}(\d u) \\ 
		&= \frac{1}{2 \pi \, \sphsurf_{d-1}} \int_{\sph^{d-1}_x} \int_0^{2\pi} \sqrt{\cos^2(\omega) + \sin^2(\omega) u_2^2} \; \d\;\!\omega \, \sigma_{d-1}^{(x)}(\d u) \\ 
		&= \frac{1}{2 \pi} \int_0^{2\pi} \E_{U \sim \, \U(\sph^{d-1}_x)}\!\left(\! \sqrt{\cos^2(\omega) + \sin^2(\omega) U_2^2} \right) \d\;\!\omega \\
		&\leq \frac{1}{2 \pi} \int_0^{2\pi} \sqrt{\cos^2(\omega) + \sin^2(\omega) \E_{U \sim \, \U(\sph^{d-1}_x)}(U_2^2)} \; \d\;\!\omega \\
		&= \frac{1}{2 \pi} \int_0^{2\pi} \sqrt{\cos^2(\omega) + \frac{\sin^2(\omega)}{d}} \, \d\;\!\omega .
		\qedhere 
	\end{align*}
\end{proof}

\section{Discussion} \label{Sec:discussion}

Let us conclude the paper with some brief comments on two aspects of our main result, Theorem \ref{Thm:main_res}. Firstly, in the limit $d \ra \infty$, the rate bound \eqref{Eq:contraction_rate} simplifies to
\begin{equation*}
	\rho 
	= \frac{1}{2 \pi} \int_0^{2 \pi} \abs{\cos(\omega)} \d \omega
	= \frac{2}{\pi}
	\approx 0.64 .
\end{equation*}
As can be seen in Figure \ref{Fig:rho_conv}, the value of \eqref{Eq:contraction_rate} takes quite some time to converge to this asymptotic rate bound, only becoming ``visually indistinguishable'' from it when the dimension is in the order of hundreds or thousands.

Secondly, the rate bound \eqref{Eq:contraction_rate} being strictly decreasing means that Theorem \ref{Thm:main_res} actually yields \textit{stronger} guarantees as the dimension $d$ increases. This suggests that we may be dealing with a case of the blessing of dimensionality, where increases in the dimension lead to improvements in a method's performance. However, we believe this effect to merely be a peculiar consequence of our proof strategy, and conjecture that the true contraction rate of the random walk (i.e.~its Dobrushin coefficient) is in fact either constant in the dimension $d$ or increasing in it.

\section*{Acknowledgements}

We are thankful for the helpful feedback of an anonymous referee.
We thank Björn Sprungk for the discussion that led to our investigation, in particular for drawing our attention to the question of dimension dependence of GSSS for a constant target, in the context of its relevance for GPSS. We thank Mareike Hasenpflug for discussions on the topic and helpful comments on a preliminary version of the manuscript.
PS gratefully acknowledges funding by the Carl Zeiss Stiftung within the program ``CZS Stiftungsprofessuren'' and the project ``Interactive Inference''. Moreover, PS is thankful for support by the DFG within project 432680300 -- Collaborative Research Center 1456 ``Mathematics of Experiment''.

\newpage 
\begin{figure}[t]
	\centering 
	\includegraphics[width=0.6\linewidth]{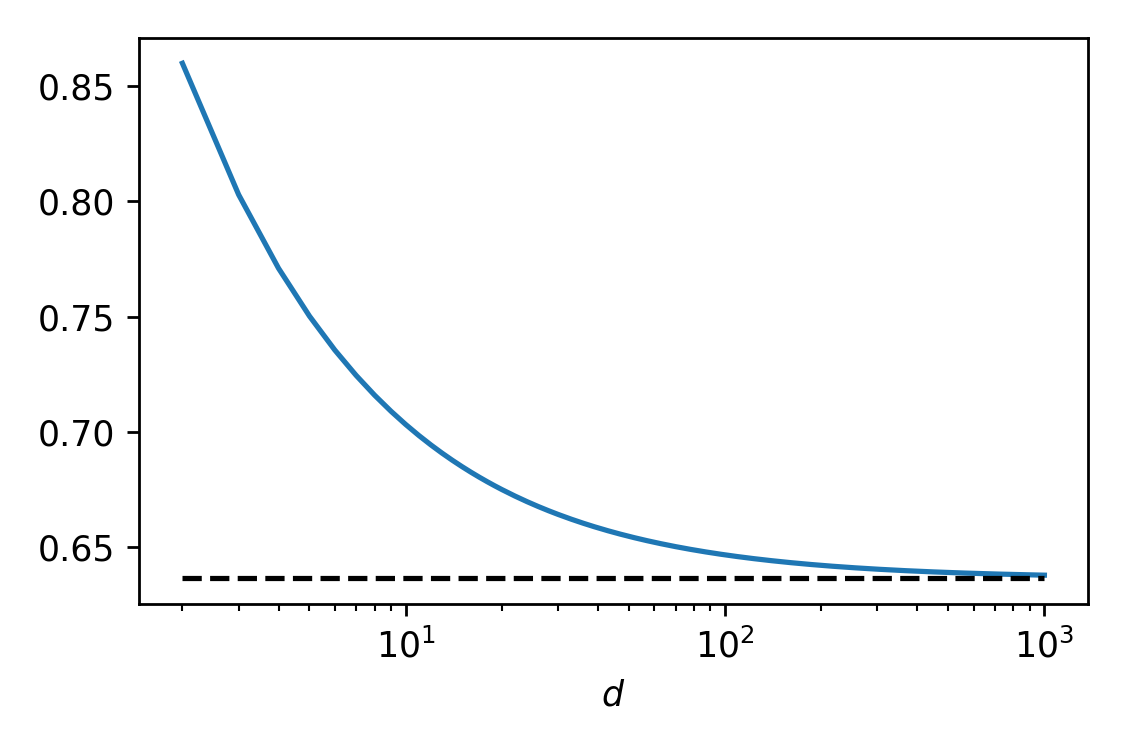}
	\caption{Convergence of the contraction rate bound \eqref{Eq:contraction_rate} to its asymptotic value. The blue line represents the value of \eqref{Eq:contraction_rate} at each integer dimension $d=2,3,\dots,10^3$. The dashed black line marks the asymptotic rate bound $2 / \pi$.}
	\label{Fig:rho_conv}
\end{figure}


\providecommand{\bysame}{\leavevmode\hbox to3em{\hrulefill}\thinspace}
\providecommand{\MR}{\relax\ifhmode\unskip\space\fi MR }
\providecommand{\MRhref}[2]{%
	\href{http://www.ams.org/mathscinet-getitem?mr=#1}{#2}
}
\providecommand{\href}[2]{#2}

\end{document}